\newcommand{\RR}{\mathbb{R}}
\newcommand{\cX}{{\mathcal{X}}}
\newtheorem{lemma}{Lemma}[section]
\newtheorem{definition}{Definition}[section]
\newtheorem{corollary}{Corollary}[section]
\newtheorem{proposition}{Proposition}[section]
\begin{document}

\title{New insights in smoothness and strong convexity with improved convergence of gradient descent}

\author{
Lu Zhang\thanks{
Department of Mathematics, National University of Defense Technology,
Changsha, Hunan 410073, China}
\and Jiani Wang\thanks{
School of Mathematical Sciences,
Chinese Academy of Sciences, Beijing 100049, China
}
\and Hui Zhang\thanks{Corresponding author.
Department of Mathematics, National University of Defense Technology,
Changsha, Hunan 410073, China.  Email: \texttt{h.zhang1984@163.com}
}
}
\date{\today}

\maketitle

\begin{abstract}
The starting assumptions to study the convergence and complexity of gradient-type methods may be the smoothness (also called Lipschitz continuity of gradient) and the strong convexity.
In this note, we revisit these two basic properties from a new perspective that motivates their definitions and equivalent characterizations, along with an improved linear convergence of the gradient descent method.
\end{abstract}

\textbf{Keywords.} smoothness, strong convexity, gradient descent, Lipschitz continuity, linear convergence

\textbf{AMS subject classifications.} 90C25,  65K05.


\section{Introduction}
In the field of optimization, certain classes of functions have to be figured out so that useful optimization theory (including optimality, complexity, and convergence, etc.) can be established. The class of functions with Lipschitz continuous gradient is one of the most important classes, for which some convergence theory of the gradient descent method can be built. The gradient Lipschitz continuity for an objective function $f$, usually called smoothness for brevity, is tantamount to requiring that the gradient of $f$ is continuous in the following sense:
\begin{equation}\label{lipg}
  \|\nabla f(x)-\nabla f(y)\|\leq L\|x-y\|,
\end{equation}
where $L>0$ is called the Lipschitz constant. However, this class of functions is too wide to guarantee convergence even to a local minimum, let alone get global performance. Thus, the strong convexity was imposed as a restriction so that better convergence guarantees are possible. It is defined by the following property:
\begin{equation}\label{scg}
 f(y)\geq f(x)+\langle \nabla f(x), y-x \rangle +\frac{1}{2}\mu\|x-y\|^2,
\end{equation}
where $\mu>0$ is called the strong-convexity constant.

Up to now, it has become a well-known fact that both smoothness and strong convexity are very fundamental properties for analyzing minimization processes, especially for gradient-type methods. Many existing classic textbooks stated them as basic definitions, along with a couple of equivalent characterizations; see e.g. \cite{nesterov2004introductory,2014Convex,beck2017first}. In the literature of optimization, they were widely cited as the most natural assumptions. However, it seems unclear where they come from, why they have their own forms \eqref{lipg} and \eqref{scg}, and how the existing equivalent conditions be constructed. In this short note, we try to answer these questions based on a simple observation in gradient descent, that is, each subproblem in the gradient descent method should be a ``good'' approximation to the objective function. From this perspective, we define the smoothness and strong convexity as a quantified description of the ``good" approximation. In order to construct a series of equivalent characterizations for these two properties, we first reformulate our definitions into convexity of certain functions and then apply a group of equivalent convexity conditions to them.

The remainder of the paper is organized as follows. In Section \ref{semotive}, we introduce the simple observation in gradient descent, and then define the smoothness and strong-convexity. In Section \ref{seequ}, we present a group of characterizations for the smoothness and strong-convexity. In Section \ref{segd}, we obtain an improved linear convergence by using one of the equivalent characterizations of the convex $L$-smoothness.

\textbf{Notation.} We restrict our attention to an arbitrary finite dimensional space $\RR^d$ associated with dot product $\langle x, y\rangle:=\sum_{i=1}^dx_iy_i$ and its induced norm $\|\cdot\|:=\sqrt{\langle \cdot, \cdot\rangle}$. For a fixed function $f$, the Bregman distance is defined as $$D_f(x,y):=f(x)-f(y)-\langle \nabla f(y),x-y\rangle,$$
the conjugate function $f^*$ is given by $$f^*(u)=\sup_{x\in \RR^d}\{\langle x,u \rangle -f(x)\}.$$

\section{Motivation of definition}\label{semotive}
We begin our study with the following simple observation in gradient descent, which will motivate us to define more general notions than the smoothness and strong-convexity.
\subsection{A simple observation from gradient descent}
The gradient descent is a well-known method for solving $\min_{x\in\RR^d} f(x)$; for $k\geq 0$ it reads as
\begin{equation}\label{GD}
 x_{k+1}=x_k-\frac{1}{t}\nabla f(x_k),
\end{equation}
where $t>0$ is the step size. From the perspective of approximation, the gradient descent scheme \eqref{GD} can be obtained by solving a quadratic approximation subproblem as follows:
\begin{equation}\label{GD1}
x_{k+1}:=\arg\min_{x\in \RR^d}\{f(x_k)+\langle \nabla f(x_k),x-x_k\rangle +\frac{t}{2}\|x-x_k\|^2\}.
\end{equation}
In order to force $x_{k+1}$ to be a good approximation to the minimizer of $f$, we expect that the objective function in the subproblem \eqref{GD1} is a ``good" approximation to the original function $f$, that is
$$f(x)\approx f(x_k)+\langle \nabla f(x_k),x-x_k\rangle +\frac{t}{2}\|x-x_k\|^2.$$
Put the terms about $f$ together to yield the following equivalent approximation formulation
\begin{equation}\label{app}
\frac{t}{2}\|x-x_k\|^2\approx f(x)- f(x_k)-\langle \nabla f(x_k),x-x_k\rangle.
\end{equation}
To write the formulation above in a symmetrical way, we let $\widetilde{\varphi}(x)=\frac{t}{2}\|x\|^2$. Then, it becomes
\begin{equation}\label{app1}
\widetilde{\varphi}(x)- \widetilde{\varphi}(x_k)-\langle \nabla \widetilde{\varphi}(x_k),x-x_k\rangle\approx f(x)- f(x_k)-\langle \nabla f(x_k),x-x_k\rangle.
\end{equation}
Since there may exist some more \textsl{suitable} function $\varphi$ than $\widetilde{\varphi}$ such that the approximation \eqref{app1} becomes better, we consider the following more general approximation:
\begin{equation}\label{app2}
\varphi(x)- \varphi(x_k)-\langle \nabla \varphi(x_k),x-x_k\rangle\approx f(x)- f(x_k)-\langle \nabla f(x_k),x-x_k\rangle.
\end{equation}
In terms of the Bregman distance, \eqref{app2} can be simplified into
\begin{equation}\label{app3}
D_\varphi(x,x_k)\approx D_f(x,x_k).
\end{equation}
Due to the arbitrariness of the initial point $x_0$, we actually require that
\begin{equation}\label{app4}
D_\varphi(x,y)\approx D_f(x,y),~ \forall ~x, y\in \RR^d.
\end{equation}

There are at least two ways to quantify the approximation \eqref{app4}. The first way is to bound the difference between $D_\varphi(x,y)$ and $D_f(x,y)$, that is there exists a constant $C< +\infty$ such that
$$\sup_{x,y\in\RR^d}|D_\varphi(x,y)-D_f(x,y)|= C.$$
A drawback is that the constant $C$ is not scale invariant since for any $\gamma>0$, we have
$$\sup_{x,y\in\RR^d}|D_{\gamma\varphi}(x,y)-D_{\gamma f}(x,y)|= \gamma C.$$
To overcome it, we consider the second way that bound the ratio between them, that is there exists two constants $\mu$ and $L$ such that for any $x, y\in \RR^d$ with $x\neq y$ we have
\begin{equation}\label{app5}
\mu\leq \frac{D_f(x,y)}{D_\varphi(x,y)}\leq L.
\end{equation}
Further, we assume the strictly convexity of $\varphi$ so that the above can be equivalently written as
\begin{equation}\label{app6}
\mu D_\varphi(x,y)\leq D_f(x,y)\leq L D_\varphi(x,y), \forall x,y\in\RR^d,
\end{equation}
which is just the relative smoothness (also called Lipschitz-like/convexity condition in \cite{Bauschke2016A}) and the relatively strong convexity, proposed recently in \cite{Lu2016relatively}.
Since the purpose of this note is to study the smoothness and the strong-convexity, we restrict our attention to \eqref{app6} with $\varphi=\varphi_0:=\frac{1}{2}\|\cdot\|^2$. The pursuit of more general discussion may be leaved as future work.

\subsection{The smoothness and strong-convexity}
Now, we are ready to define the smoothness and the strong-convexity in terms of the Bregman distance as follows.
\begin{definition}\label{def1}
Let $f:\RR^d\rightarrow \RR$ be a differentiable function and $\varphi_0=\frac{1}{2}\|\cdot\|^2$. Let $L$ and $\mu$ be two given nonnegative constant. We say that $f$ is $L$-smooth if
\begin{equation}\label{app7}
 D_f(x,y)\leq L D_{\varphi_0}(x,y), \forall x,y\in\RR^d,
\end{equation}
and $\mu$-strongly-convex if
\begin{equation}\label{app8}
\mu D_{\varphi_0}(x,y)\leq D_f(x,y), \forall x,y\in\RR^d.
\end{equation}
\end{definition}

After some simple calculations, we can separately write the $L$-smoothness and the $\mu$-strong-convexity in the following equivalent forms
\begin{subequations}\label{app9}
\begin{align}
\label{sm}&f(y)\leq f(x)+\langle \nabla f(x), y-x\rangle+\frac{L}{2}\|y-x\|^2,\\
\label{sc}&f(y)\geq f(x)+\langle \nabla f(x), y-x\rangle+\frac{\mu}{2}\|y-x\|^2.
\end{align}
\end{subequations}
They are the usual definitions appeared in textbooks. The advantage of Definition \ref{def1} is that we can easily turn \eqref{sm}-\eqref{sc} into convexity of certain functions. Actually, applying the linearity of the Bregman distance to \eqref{app7}, we obtain $D_{L\varphi_0-f}(y,x)\geq0$, which equivalently means the convexity of $L\varphi_0-f$. Similarly, the $\mu$-strong-convexity is equivalent to the convexity of $f-\mu\varphi_0$. We summarize these results in the following lemma.

\begin{lemma}\label{lemdef}
Let $f:\RR^d\rightarrow \RR$ be a differentiable function and $\varphi_0=\frac{1}{2}\|\cdot\|^2$. Then,
\begin{enumerate}
  \item[1).] $f$ is $L$-smooth if and only if $L\varphi_0-f$ is convex;
  \item[2).] $f$ is $\mu$-strongly-convex if and only if $f-\mu\varphi_0$ is convex.
\end{enumerate}
\end{lemma}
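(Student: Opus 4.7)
The plan is to exploit two elementary facts about the Bregman distance that together reduce the lemma to a bookkeeping exercise. First, I would record the linearity of $D_f$ in the function $f$: for any differentiable $g, h$ and scalars $\alpha, \beta$ one has $D_{\alpha g + \beta h}(x,y) = \alpha D_g(x,y) + \beta D_h(x,y)$, which is immediate from the definition of the Bregman distance and the linearity of the gradient. Second, I would invoke the standard characterization that a differentiable $g : \RR^d \to \RR$ is convex if and only if $g(x) \geq g(y) + \langle \nabla g(y), x - y\rangle$ for all $x, y$, i.e., $D_g(x,y) \geq 0$ for all $x, y$; this can be cited from any of \cite{nesterov2004introductory,2014Convex,beck2017first}.

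With these two ingredients in hand, part 1 falls out in two lines. I would rewrite the defining inequality \eqref{app7}, namely $D_f(x,y) \leq L D_{\varphi_0}(x,y)$, as $L D_{\varphi_0}(x,y) - D_f(x,y) \geq 0$, and then apply linearity to collapse the left-hand side into $D_{L\varphi_0 - f}(x,y)$. The function $L\varphi_0 - f$ is differentiable on $\RR^d$ (since $\varphi_0$ is smooth and $f$ is differentiable by hypothesis), so the gradient-inequality characterization of convexity yields that the displayed inequality holds for every pair $(x,y)$ if and only if $L\varphi_0 - f$ is convex. Part 2 would be proved by exactly the same maneuver: the defining inequality \eqref{app8} rearranges to $D_{f - \mu\varphi_0}(x,y) \geq 0$ for all $x,y$, which characterizes convexity of the differentiable function $f - \mu\varphi_0$.

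There is essentially no genuine obstacle. The only points worth flagging in the write-up are (i) the linearity of $D_f$, which should be recorded as a one-line observation before the main argument, and (ii) the tacit use of differentiability of $L\varphi_0 - f$ and $f - \mu\varphi_0$ when appealing to the gradient-inequality form of convexity. Both are essentially automatic, so the whole proof should occupy only a few lines.
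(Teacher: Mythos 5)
Your proposal is correct and follows essentially the same route as the paper: the paper also rewrites \eqref{app7} and \eqref{app8} via linearity of the Bregman distance as $D_{L\varphi_0-f}\geq 0$ and $D_{f-\mu\varphi_0}\geq 0$, and identifies nonnegativity of the Bregman distance with the gradient-inequality characterization of convexity (Lemma \ref{lemconv}, condition \eqref{conv3}). Your added remarks on linearity and on the differentiability of $L\varphi_0-f$ and $f-\mu\varphi_0$ are sound and merely make explicit what the paper leaves tacit.
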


\section{Characterization of equivalence}\label{seequ}
In this section, we will present three groups of equivalent characterizations for the smoothness and strong convexity. The first group is based on Lemma \ref{lemdef} and different definitions of convexity.  The second group is obtained from a point of dual view. The last group is obtained by using some Fenchel dual properties. To this end, we first introduce two basic results.

\begin{lemma}[Proposition 14.2 in \cite{Bauschke2017}]\label{lemdual}
Let $f:\RR^d\rightarrow \RR$ be a differentiable convex function, $\varphi_0=\frac{1}{2}\|\cdot\|^2$, and $\gamma>0$. Then, $\gamma\varphi_0-f$ is convex if and only if $f^*-\gamma^{-1}\varphi_0$ is convex.
\end{lemma}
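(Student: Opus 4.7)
By Lemma~\ref{lemdef}, the assertion is equivalent to the classical Moreau-style duality: $f$ is convex and $\gamma$-smooth if and only if $f^*$ is $\gamma^{-1}$-strongly convex. I would prove the two implications separately. The ``easy'' direction exhibits $f$ as a Moreau envelope via biconjugation; the ``hard'' direction derives the Baillon--Haddad cocoercivity identity and transfers it to strong monotonicity of $\partial f^*$ through the Fenchel inversion rule.

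\textbf{Envelope direction.} Assume $h := f^*-\gamma^{-1}\varphi_0$ is convex. Since $f\colon\RR^d\to\RR$ is convex and differentiable, it is continuous and proper, so $f=f^{**}$. Writing $f^*=h+\gamma^{-1}\varphi_0$, where $h$ is convex lsc proper and $\gamma^{-1}\varphi_0$ is finite and continuous (so the standard conjugate-of-sum identity applies without extra qualification), and using $(\gamma^{-1}\varphi_0)^*=\gamma\varphi_0$, one computes
\[
f(x)=(h+\gamma^{-1}\varphi_0)^*(x)=(h^*\,\square\,\gamma\varphi_0)(x)=\inf_{y\in\RR^d}\Bigl\{h^*(y)+\tfrac{\gamma}{2}\|x-y\|^2\Bigr\}.
\]
Thus $f$ is the Moreau envelope of $h^*$ with parameter $\gamma^{-1}$. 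By standard properties of Moreau envelopes, $f$ is convex and differentiable with $\gamma$-Lipschitz gradient, which by Lemma~\ref{lemdef} is equivalent to convexity of $\gamma\varphi_0-f$.

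\textbf{Baillon--Haddad direction.} Conversely, assume $\gamma\varphi_0-f$ is convex, so $f$ is convex and $\gamma$-smooth. I would first establish cocoercivity
\[
\langle \nabla f(x)-\nabla f(y),\,x-y\rangle\;\ge\;\gamma^{-1}\|\nabla f(x)-\nabla f(y)\|^2\qquad\forall\,x,y\in\RR^d
\]
by the standard Baillon--Haddad trick: fix $x$, introduce $\phi(z):=f(z)-\langle\nabla f(x),z\rangle$ (convex, $\gamma$-smooth, minimized at $x$), perform one gradient step from $y$ and combine the descent lemma with $\phi(x)\le\phi(y-\gamma^{-1}\nabla\phi(y))$ to obtain $\phi(x)\le\phi(y)-\tfrac{1}{2\gamma}\|\nabla f(y)-\nabla f(x)\|^2$; rewriting as a Bregman inequality $D_f(y,x)\ge\tfrac{1}{2\gamma}\|\nabla f(y)-\nabla f(x)\|^2$ and symmetrizing in $x,y$ yields the desired estimate. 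Then, using the Fenchel inversion $p\in\partial f^*(u)\Leftrightarrow u=\nabla f(p)$, for any $p\in\partial f^*(u)$ and $q\in\partial f^*(v)$,
\[
\langle p-q,u-v\rangle=\langle p-q,\nabla f(p)-\nabla f(q)\rangle\ge\gamma^{-1}\|\nabla f(p)-\nabla f(q)\|^2=\gamma^{-1}\|u-v\|^2,
\]
which is $\gamma^{-1}$-strong monotonicity of $\partial f^*$, equivalent to $\gamma^{-1}$-strong convexity of $f^*$ and, by Lemma~\ref{lemdef}, to convexity of $f^*-\gamma^{-1}\varphi_0$.

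\textbf{Main obstacle.} The substantive content sits in the Baillon--Haddad step: extracting cocoercivity from the joint hypothesis of convexity and $\gamma$-smoothness. Once cocoercivity is in hand, the Fenchel inversion and the envelope direction are essentially bookkeeping based on elementary conjugate calculus together with the full-domain continuity of $\gamma^{-1}\varphi_0$, which automatically legitimizes both the conjugate-of-sum identity and biconjugation of $f$.
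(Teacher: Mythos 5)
The paper itself offers no proof of this lemma: it is imported verbatim from \cite{Bauschke2017}, so there is no internal argument to compare against, and your proposal should be judged as a free-standing proof. As such it is essentially correct, and it has the important virtue of being non-circular: you do not invoke Proposition \ref{pro2} or Corollary \ref{cor1} (which in the paper are \emph{consequences} of this lemma), but re-derive the cocoercivity inequality from scratch via the gradient-step/Baillon--Haddad trick, and that derivation is sound (the auxiliary function $\phi(z)=f(z)-\langle\nabla f(x),z\rangle$ has the same Bregman distance as $f$, is minimized at $x$, and the descent step gives $D_f(y,x)\ge\tfrac{1}{2\gamma}\|\nabla f(x)-\nabla f(y)\|^2$, which symmetrizes to cocoercivity). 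The envelope direction is also fine: $h=f^*-\gamma^{-1}\varphi_0$ is proper, lsc and convex, $f=f^{**}=(h+\gamma^{-1}\varphi_0)^*$ is the Moreau envelope of $h^*$ with parameter $\gamma^{-1}$, and the passage from $\gamma$-Lipschitz continuity of its gradient to the Bregman bound \eqref{sm3} is the elementary integral argument appearing at the end of the proof of Corollary \ref{cor1}, which does not depend on the present lemma. (You could even bypass differentiability of the envelope: $\gamma\varphi_0(x)-\inf_y\{h^*(y)+\tfrac{\gamma}{2}\|x-y\|^2\}=\sup_y\{\gamma\langle x,y\rangle-\tfrac{\gamma}{2}\|y\|^2-h^*(y)\}$ is a supremum of affine functions of $x$, hence convex directly.)

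The one step you should tighten is the very last sentence of the Baillon--Haddad direction. Lemma \ref{lemdef} is stated for differentiable, real-valued functions, whereas $f^*$ may be extended-real-valued and non-differentiable (take $f$ affine, so that $f^*$ is an indicator of a point); so you cannot appeal to Lemma \ref{lemdef} to convert $\gamma^{-1}$-strong monotonicity of $\partial f^*$ into convexity of $f^*-\gamma^{-1}\varphi_0$. What is actually needed is the standard fact that for a proper lsc convex function $g$, $\mu$-strong monotonicity of $\partial g$ implies convexity of $g-\mu\varphi_0$; this is true but deserves its own justification, e.g.\ restrict $g$ to a segment joining two points of $\dom \partial g$, integrate the resulting one-dimensional derivative bound to get the strong-convexity inequality there, and extend to all of $\dom g$ by lower semicontinuity and density of $\dom\partial g$ --- or simply cite this equivalence from \cite{Bauschke2017} or \cite{Rockafellar2004Variational}. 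With that replacement the argument is complete; it is longer than a pure conjugacy-calculus derivation but has the side benefit of producing the cocoercivity estimate \eqref{psm1} along the way.
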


\begin{lemma}[Theorem 23.5 and Corollary 23.5.1 in \cite{1970convex}]\label{lemfenc}
Let $f:\RR^d\rightarrow \RR$ be a convex function. Then, the conditions $f(u)+f^*(u^*)=\langle u, u^*\rangle$, $u^*\in \partial f(u)$, and $u\in\partial f^*(u^*)$ are equivalent. Moreover, $\partial f^*$ is the inverse of $\partial f$.
\end{lemma}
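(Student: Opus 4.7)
The plan is to prove (a) $\Leftrightarrow$ (b) directly from the definition of the conjugate via the Fenchel--Young inequality, then deduce (a) $\Leftrightarrow$ (c) by symmetry using the biconjugate identity $f^{**}=f$, and finally read off the ``inverse'' statement as a compact reformulation of (b) $\Leftrightarrow$ (c).

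First I would start from the Fenchel--Young inequality
\[
f(x) + f^*(u^*) \;\geq\; \langle x, u^*\rangle \qquad \text{for all } x\in\RR^d,
\]
which is immediate from the definition $f^*(u^*) = \sup_{x}\{\langle x,u^*\rangle - f(x)\}$. The equality in (a) is thus equivalent to asserting that the supremum defining $f^*(u^*)$ is attained at $x=u$, i.e.\ $\langle x,u^*\rangle - f(x) \leq \langle u,u^*\rangle - f(u)$ for every $x$. Rearranging this inequality gives $f(x) \geq f(u) + \langle u^*, x-u\rangle$ for all $x$, which is the very definition of $u^* \in \partial f(u)$. This establishes (a) $\Leftrightarrow$ (b).

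Next I would obtain (a) $\Leftrightarrow$ (c) by applying the same argument to the conjugate pair $(f^*, f^{**})$ at the swapped point $(u^*,u)$. This is legitimate because $f:\RR^d\to\RR$ is real-valued and convex, hence automatically continuous and proper, so the Fenchel--Moreau biconjugate theorem yields $f^{**}=f$. Repeating the argument above with the roles of $f$ and $f^*$ interchanged shows that $f^*(u^*)+f(u)=\langle u^*,u\rangle$ is equivalent to $u\in\partial f^*(u^*)$, which is exactly (a) $\Leftrightarrow$ (c). Chaining the two equivalences gives the desired three-way equivalence, and the final sentence of the lemma is just a restatement of (b) $\Leftrightarrow$ (c): the graph of $\partial f^*$ is obtained from the graph of $\partial f$ by swapping coordinates, so $\partial f^* = (\partial f)^{-1}$ as set-valued maps.

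The step I expect to require the most care is the invocation of the biconjugate theorem; strictly speaking $f^{**}=f$ requires $f$ to be proper, convex, and lower semicontinuous, but since the hypothesis takes $f$ to be a convex function defined on all of $\RR^d$ with values in $\RR$, continuity (hence lower semicontinuity) is automatic, and the theorem applies. Everything else is a routine rearrangement of the Fenchel--Young inequality.
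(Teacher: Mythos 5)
Your argument is correct. Note that the paper does not prove this lemma at all: it is quoted verbatim from Rockafellar (Theorem 23.5 and Corollary 23.5.1 in the cited \emph{Convex Analysis}), so there is no in-paper proof to compare against; your derivation---equality in Fenchel--Young holds iff the supremum defining $f^*(u^*)$ is attained at $u$, which rearranges to $u^*\in\partial f(u)$, followed by the symmetric argument for $(f^*,f^{**})$ together with $f^{**}=f$---is exactly the classical proof, and your justification of $f^{**}=f$ (a finite convex function on $\RR^d$ is continuous, hence proper and lower semicontinuous) is the right point to flag. The only step worth stating explicitly is that when you rerun the Fenchel--Young argument for $f^*$, that function may take the value $+\infty$; this is harmless because $f^*$ is proper, convex and closed, and at points where $f^*(u^*)=+\infty$ both the equality in (a) and the condition $u\in\partial f^*(u^*)$ fail, so the equivalence persists.
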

We also need the equivalent definitions of convexity.
\begin{lemma}[Theorem 2.14 in \cite{Rockafellar2004Variational}]\label{lemconv}
Let $f:\RR^d\rightarrow \RR$ be a differentiable function. Then, it is convex if and only if one of the following conditions holds:
\begin{subequations}\label{defconv}
\begin{align}
\label{conv1}& f(\lambda x+(1-\lambda)y)\leq \lambda f(x)+(1-\lambda)f(y),\forall x,y\in\RR^d, \lambda\in [0,1],\\
\label{conv2}& \langle \nabla f(x)-\nabla f(y), x-y\rangle \geq 0, \forall x,y\in\RR^d,\\
\label{conv3}& f(y)\geq f(x)+\langle \nabla f(x), y-x\rangle, \forall x,y\in\RR^d.
\end{align}
\end{subequations}
\end{lemma}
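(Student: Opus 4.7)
The plan is to establish the equivalence by a cyclic chain of implications among the three conditions, since each taken alone is a standard first-order or zeroth-order characterization of convexity. Specifically, I would prove $\eqref{conv1}\Rightarrow\eqref{conv3}\Rightarrow\eqref{conv2}\Rightarrow\eqref{conv1}$, which is the most economical route because differentiability lets us move freely between function values and gradients.

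For $\eqref{conv1}\Rightarrow\eqref{conv3}$, I would rewrite the Jensen inequality along the segment from $x$ to $y$: set $z_\lambda = x+\lambda(y-x)$, so that $f(z_\lambda)\leq f(x)+\lambda(f(y)-f(x))$. Dividing by $\lambda>0$ gives $(f(z_\lambda)-f(x))/\lambda\leq f(y)-f(x)$, and letting $\lambda\downarrow 0$ the left-hand side converges to the directional derivative $\langle \nabla f(x),y-x\rangle$ by differentiability, delivering \eqref{conv3}. For $\eqref{conv3}\Rightarrow\eqref{conv2}$, I would simply apply \eqref{conv3} at the pair $(x,y)$ and at the swapped pair $(y,x)$ and add the two resulting inequalities; the function-value terms cancel, leaving $\langle \nabla f(x)-\nabla f(y),x-y\rangle\geq 0$.

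The step that actually requires work is $\eqref{conv2}\Rightarrow\eqref{conv1}$, where one has to turn pointwise monotonicity of the gradient into a global inequality for function values. My plan is to reduce to a one-dimensional statement by introducing $\phi(\lambda):=f(x+\lambda(y-x))$ on $[0,1]$. Since $f$ is differentiable, $\phi$ is differentiable with $\phi'(\lambda)=\langle \nabla f(x+\lambda(y-x)),y-x\rangle$. Using \eqref{conv2} with the two points $x+\lambda_1(y-x)$ and $x+\lambda_2(y-x)$ for $\lambda_1<\lambda_2$ and dividing by $\lambda_2-\lambda_1>0$ shows $\phi'(\lambda_1)\leq \phi'(\lambda_2)$, so $\phi'$ is non-decreasing on $[0,1]$. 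A non-decreasing derivative forces $\phi$ to be convex on $[0,1]$ (via the mean value theorem applied to $\phi(\lambda)-[\lambda\phi(1)+(1-\lambda)\phi(0)]$, or equivalently by integrating $\phi'$), and evaluating the resulting one-dimensional convexity at an arbitrary $\lambda\in[0,1]$ yields exactly \eqref{conv1}.

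The main obstacle in the argument is precisely this last reduction: passing from a monotonicity inequality of $\nabla f$ to a pointwise inequality on $f$ requires one genuine analytic step, and care must be taken to invoke the mean value theorem on the restriction $\phi$ rather than directly on $f$, because only $\phi$ is guaranteed to live on an interval where classical one-variable calculus applies. Everything else is purely algebraic manipulation, so the write-up would be short once this reduction is in hand.
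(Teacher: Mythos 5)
Your argument is correct: the cycle $\eqref{conv1}\Rightarrow\eqref{conv3}\Rightarrow\eqref{conv2}\Rightarrow\eqref{conv1}$ is complete, the difference-quotient limit, the symmetrization step, and the reduction to the one-variable function $\phi(\lambda)=f(x+\lambda(y-x))$ with nondecreasing $\phi'$ are all sound (the mean value theorem argument on $\phi(\lambda)-[\lambda\phi(1)+(1-\lambda)\phi(0)]$ does close the last implication). Note, however, that the paper does not prove this lemma at all: it is quoted verbatim as Theorem 2.14 of Rockafellar and Wets, so there is no in-paper proof to compare against; your write-up is simply the standard self-contained derivation of that cited result.
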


\subsection{Characterizations of smoothness}
Combining the first statement in Lemma \ref{lemdef} with the different convexity conditions in Lemma \ref{lemconv}, we have the following equivalent characterization of smoothness.
\begin{proposition}\label{pro1}
Let $f:\RR^d\rightarrow \RR$ be a differentiable function. Then, it is $L$-smooth if and only if one of the following conditions holds:
\begin{subequations}\label{smooth1}
\begin{align}
\label{sm1}& f(\lambda x+(1-\lambda)y)\geq \lambda f(x)+(1-\lambda)f(y)-\frac{L}{2}\lambda(1-\lambda)\|x-y\|^2,\forall x,y\in\RR^d, \lambda\in [0,1],\\
\label{sm2}& \langle \nabla f(x)-\nabla f(y), x-y\rangle \leq L\|x-y\|^2, \forall x,y\in\RR^d,\\
\label{sm3}& f(y)\leq f(x)+\langle \nabla f(x), y-x\rangle+\frac{L}{2}\|x-y\|^2, \forall x,y\in\RR^d.
\end{align}
\end{subequations}
\end{proposition}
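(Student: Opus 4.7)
The plan is to exploit Lemma \ref{lemdef}(1), which reduces $L$-smoothness to the single statement that $g := L\varphi_0 - f$ is convex, and then to feed $g$ through each of the three equivalent convexity criteria supplied by Lemma \ref{lemconv}. Each resulting inequality on $g$ is then rewritten as an inequality on $f$ using the two elementary identities $g(x) = \tfrac{L}{2}\|x\|^2 - f(x)$ and $\nabla g(x) = Lx - \nabla f(x)$.

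Concretely, I would proceed condition by condition. Applying \eqref{conv3} to $g$ yields $\tfrac{L}{2}\|y\|^2 - f(y) \geq \tfrac{L}{2}\|x\|^2 - f(x) + \langle Lx - \nabla f(x), y-x\rangle$, and collecting the quadratic terms via the identity $\tfrac{L}{2}\|x\|^2 - \tfrac{L}{2}\|y\|^2 + L\langle x, y-x\rangle = -\tfrac{L}{2}\|x-y\|^2$ produces exactly \eqref{sm3}. Applying \eqref{conv2} to $g$ gives $\langle L(x-y) - (\nabla f(x)-\nabla f(y)), x-y\rangle \geq 0$, which rearranges to \eqref{sm2}. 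Applying \eqref{conv1} to $g$ gives
\begin{equation*}
\tfrac{L}{2}\|\lambda x+(1-\lambda)y\|^2 - f(\lambda x+(1-\lambda)y) \leq \lambda\bigl(\tfrac{L}{2}\|x\|^2 - f(x)\bigr) + (1-\lambda)\bigl(\tfrac{L}{2}\|y\|^2 - f(y)\bigr),
\end{equation*}
and invoking the well-known identity $\lambda\|x\|^2 + (1-\lambda)\|y\|^2 - \|\lambda x + (1-\lambda)y\|^2 = \lambda(1-\lambda)\|x-y\|^2$ collapses the quadratic terms into $\tfrac{L}{2}\lambda(1-\lambda)\|x-y\|^2$, yielding \eqref{sm1}.

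Since each of \eqref{conv1}, \eqref{conv2}, \eqref{conv3} is equivalent to convexity of $g$ by Lemma \ref{lemconv}, and the three algebraic manipulations above are reversible, we obtain that \eqref{sm1}, \eqref{sm2}, \eqref{sm3} are each equivalent to the convexity of $L\varphi_0 - f$, and hence by Lemma \ref{lemdef}(1) to the $L$-smoothness of $f$. I do not anticipate any genuine obstacle: the entire argument is a mechanical translation between $f$ and $g = L\varphi_0 - f$, and the only step requiring care is keeping track of signs and of the quadratic cross-terms in the derivation of \eqref{sm1} from \eqref{conv1}, which is handled by the stated polarization identity.
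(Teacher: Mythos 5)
Your proposal is correct and follows exactly the route the paper intends: it invokes Lemma \ref{lemdef}(1) to reduce $L$-smoothness to convexity of $L\varphi_0-f$ and then translates each of the three convexity criteria in Lemma \ref{lemconv} into \eqref{sm1}--\eqref{sm3}; the paper states this combination without detail, and your algebraic verifications (including the polarization identity for \eqref{sm1}) are accurate.
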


In order to obtain a group of dual characterizations, we need to assume that the differentiable function $f$ is also convex so that Lemma \ref{lemdual} can be invoked. In other words, the convexity of $L\varphi_0-f$ is equivalent to the convexity of $f^*-L^{-1}\varphi$. Before stating the dual characterizations, we denote by $\Gamma(\RR^d)$ the class of functions with the following properties:
\begin{itemize}
  \item $f$ is convex and differentiable;
  \item its conjugate $f^*$ is differentiable.
\end{itemize}

Now, together with Lemma \ref{lemconv}, we immediately have the following result, which seems new to the best of our knowledge. It is obtained at the cost of restricting the objectives into the class of functions $\Gamma(\RR^d)$.
\begin{proposition}\label{pro2}
Let $f\in \Gamma(\RR^d)$. Then, it is $L$-smooth if and only if one of the following conditions holds:
\begin{subequations}\label{smooth2}
\begin{align}
\label{dsm1}& f^*(\lambda u+(1-\lambda)v)\leq \lambda f^*(u)+(1-\lambda)f^*(v)-\frac{1}{2L}\lambda(1-\lambda)\|u-v\|^2,\forall u,v\in\RR^d, \lambda\in [0,1],\\
\label{dsm2}& \langle \nabla f^*(u)-\nabla f^*(v), u-v\rangle \geq \frac{1}{L}\|u-v\|^2, \forall u,v\in\RR^d,\\
\label{dsm3}& f^*(u)\geq f^*(v)+\langle \nabla f^*(v), u-v\rangle+\frac{1}{2L}\|u-v\|^2, \forall u,v\in\RR^d.
\end{align}
\end{subequations}
\end{proposition}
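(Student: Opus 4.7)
The plan is to reduce Proposition \ref{pro2} to Lemma \ref{lemconv} via the conjugate duality bridge provided by Lemma \ref{lemdual}. The starting point is the first statement of Lemma \ref{lemdef}: $f$ is $L$-smooth if and only if $L\varphi_0 - f$ is convex. Since $f \in \Gamma(\RR^d)$, in particular $f$ is convex and differentiable, so Lemma \ref{lemdual} (applied with $\gamma = L$) tells us this is further equivalent to convexity of $f^* - L^{-1}\varphi_0$. Thus the whole proposition becomes the task of translating the convexity of $g := f^* - L^{-1}\varphi_0$ into statements about $f^*$ alone, and this is exactly what Lemma \ref{lemconv} does once we unwind the definition of $g$.

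The execution is then a three-way case analysis. First, I would apply \eqref{conv1} to $g$; using the identity $\lambda\|u\|^2 + (1-\lambda)\|v\|^2 - \|\lambda u + (1-\lambda)v\|^2 = \lambda(1-\lambda)\|u-v\|^2$ to collect the quadratic terms, the resulting inequality rearranges directly into \eqref{dsm1}. Second, I would apply \eqref{conv2} to $g$; since $f^*$ is differentiable by the definition of $\Gamma(\RR^d)$, we have $\nabla g(u) = \nabla f^*(u) - u/L$, and a one-line computation converts the monotonicity inequality for $\nabla g$ into \eqref{dsm2}. Third, I would apply \eqref{conv3} to $g$; substituting $\nabla g(v) = \nabla f^*(v) - v/L$ and expanding $\|u\|^2 - \|v\|^2 - 2\langle v, u-v\rangle = \|u-v\|^2$ yields \eqref{dsm3} after regrouping. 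Each of the three implications is reversible because each listed condition is algebraically equivalent to convexity of $g$ via Lemma \ref{lemconv}.

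The main (mild) obstacle is bookkeeping: ensuring the gradient identity $\nabla g = \nabla f^* - (\cdot)/L$ is legitimate, which is precisely why the hypothesis $f \in \Gamma(\RR^d)$ is imposed (so that $f^*$ is differentiable and its subdifferential reduces to a single gradient, in accord with Lemma \ref{lemfenc}). Apart from that, no hard analysis is needed; the conversion is purely algebraic and runs through each of \eqref{dsm1}, \eqref{dsm2}, \eqref{dsm3} in parallel.
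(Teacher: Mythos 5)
Your proposal is correct and matches the paper's own route: the paper likewise combines Lemma \ref{lemdef} with Lemma \ref{lemdual} (with $\gamma=L$) to identify $L$-smoothness with convexity of $f^*-L^{-1}\varphi_0$, and then reads off \eqref{dsm1}--\eqref{dsm3} from the three convexity characterizations in Lemma \ref{lemconv}. Your explicit algebraic unwinding of the three conditions is exactly the computation the paper leaves implicit.
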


\begin{corollary}\label{cor1}
Let $f$ be convex and differentiable. Then, it is $L$-smooth if and only if one of the following conditions holds:
\begin{subequations}\label{smooth3}
\begin{align}
\label{psm1}& \langle \nabla f(x)-\nabla f(y), x-y\rangle \geq \frac{1}{L}\|\nabla f(x)-\nabla f(y)\|^2, \forall x,y\in\RR^d,\\
\label{psm2}& f(y)\geq f(x)+\langle \nabla f(x), y-x\rangle+\frac{1}{2L}\|\nabla f(x)-\nabla f(y)\|^2, \forall x,y\in\RR^d,\\
\label{psm3}& \|\nabla f(x)-\nabla f(y)\|\leq L\|x-y\|, \forall x,y\in\RR^d.
\end{align}
\end{subequations}
\end{corollary}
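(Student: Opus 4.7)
The plan is to derive this corollary from Proposition \ref{pro2} via Fenchel duality (Lemma \ref{lemfenc}), closing a short cycle with a direct argument that recovers $L$-smoothness from \eqref{psm3}. The first task is to argue that, although the corollary only assumes $f$ convex and differentiable rather than $f\in\Gamma(\RR^d)$, the additional $L$-smoothness hypothesis forces $f^*$ to be differentiable at least on the range of $\nabla f$, which is all that is needed for the substitution below. Indeed, by Lemma \ref{lemdef}(2) applied to the dual, combined with Lemma \ref{lemdual}, $L$-smoothness of $f$ is equivalent to $f^*$ being $L^{-1}$-strongly convex, hence strictly convex; strict convexity forces $\partial f^*$ to be at most single-valued, and Lemma \ref{lemfenc} shows that whenever $u=\nabla f(x)$ one has $x\in\partial f^*(u)$, so $\nabla f^*(u)=x$ holds on the range of $\nabla f$.

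With the key identity $\nabla f^{*}(\nabla f(x))=x$ in hand, I would substitute $u=\nabla f(x)$ and $v=\nabla f(y)$ into the dual characterizations of Proposition \ref{pro2}. Inequality \eqref{dsm2} then translates directly into \eqref{psm1}; for \eqref{psm2}, I would combine \eqref{dsm3} with the Young equality $f^{*}(\nabla f(x))=\langle x,\nabla f(x)\rangle-f(x)$ (supplied by Lemma \ref{lemfenc}) and rearrange. The implication \eqref{psm1}$\Rightarrow$\eqref{psm3} is immediate from Cauchy--Schwarz:
\begin{equation*}
\tfrac{1}{L}\|\nabla f(x)-\nabla f(y)\|^2\leq \langle \nabla f(x)-\nabla f(y),x-y\rangle\leq \|\nabla f(x)-\nabla f(y)\|\cdot\|x-y\|,
\end{equation*}
which yields $\|\nabla f(x)-\nabla f(y)\|\leq L\|x-y\|$.

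To close the equivalence cycle, I would prove \eqref{psm3}$\Rightarrow$ $L$-smoothness by the classical descent-lemma argument: writing $f(y)-f(x)-\langle \nabla f(x),y-x\rangle=\int_0^1\langle \nabla f(x+t(y-x))-\nabla f(x),y-x\rangle\,dt$, applying Cauchy--Schwarz and \eqref{psm3} inside the integral yields $\frac{L}{2}\|y-x\|^2$ as an upper bound, which is exactly \eqref{sm3} of Proposition \ref{pro1}, and hence $L$-smoothness. The main obstacle I anticipate lies in the first paragraph: rigorously justifying that single-valuedness of $\partial f^{*}$ on $\mathrm{range}(\nabla f)$ delivers the pointwise identity $\nabla f^{*}(\nabla f(x))=x$ without assuming $f^{*}$ to be globally differentiable, i.e., without invoking the full membership $f\in\Gamma(\RR^d)$. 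Since the substitutions $u=\nabla f(x)$ and $v=\nabla f(y)$ only probe $\nabla f^{*}$ on the range of $\nabla f$, the strict convexity produced by $L^{-1}$-strong convexity of $f^{*}$ is sufficient, and this obstacle is surmountable with careful bookkeeping of where $\partial f^{*}$ is nonempty.
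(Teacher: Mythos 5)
Your reduction has a genuine gap at exactly the point you flag as the crux. You claim that $L$-smoothness makes $f^*$ an $L^{-1}$-strongly convex, hence strictly convex, function and that strict convexity ``forces $\partial f^*$ to be at most single-valued.'' That duality is backwards: strict convexity of a convex function $g$ makes $\partial g^*$ single-valued, not $\partial g$; single-valuedness of $\partial f^*$ would instead require strict convexity of $f^{**}=f$, which is not assumed. Concretely, by Lemma \ref{lemfenc} one has $\partial f^*(\nabla f(x))=\{z:\nabla f(z)=\nabla f(x)\}$, which is a singleton only when $\nabla f$ is injective. Take $f\equiv 0$ (convex, differentiable, $L$-smooth for every $L>0$): then $f^*=\iota_{\{0\}}$ is trivially $L^{-1}$-strongly convex on its domain, yet $\partial f^*(0)=\RR^d$, so the identity $\nabla f^*(\nabla f(x))=x$ does not even make sense and Proposition \ref{pro2} cannot be invoked as you do, since its hypotheses and its inequalities \eqref{dsm2}--\eqref{dsm3} presuppose a differentiable $f^*$. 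The step is repairable, but not by strict convexity: either (a) work with subgradients --- Lemma \ref{lemfenc} gives $x\in\partial f^*(\nabla f(x))$, and convexity of $f^*-L^{-1}\varphi_0$ (Lemma \ref{lemdual}) yields the strong monotonicity and subgradient inequalities for $\partial f^*$, which is all that your substitution actually needs to produce \eqref{psm1} and \eqref{psm2} --- or (b) do what the paper does: regularize, setting $f_\epsilon=f+\epsilon\varphi_0\in\Gamma(\RR^d)$, apply the $\Gamma(\RR^d)$ case (where your substitution argument is exactly the paper's), and let $\epsilon\to 0$.

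There is a second, smaller gap: your implication graph never returns from \eqref{psm2} to $L$-smoothness. You derive \eqref{psm2} from $L$-smoothness and close the cycle only through \eqref{psm1}$\Rightarrow$\eqref{psm3}$\Rightarrow$$L$-smooth, so \eqref{psm2} is shown to be necessary but not sufficient, whereas the corollary asserts each condition alone is equivalent to $L$-smoothness. The paper closes this by writing \eqref{psm2} at $(x,y)$ and at $(y,x)$ and adding, which gives \eqref{psm1}, hence \eqref{psm3}, hence \eqref{sm3}. Your Cauchy--Schwarz step for \eqref{psm1}$\Rightarrow$\eqref{psm3} and your fundamental-theorem-of-calculus argument for \eqref{psm3}$\Rightarrow$$L$-smoothness coincide with the paper's.
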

\begin{proof}
First of all, we show the equivalence under the assumption of $f\in \Gamma(\RR^d)$ so that Proposition \ref{pro2} can be applied. Actually, the first two conditions follows from \eqref{dsm2} and \eqref{dsm3} separately by letting $\nabla f^*(u)=x, \nabla f^*(v)=y$ and using the fact $(\nabla f )^{-1}=\nabla f^*$ and the following relationships in Lemma \ref{lemfenc}:
\begin{subequations}
\begin{align}
 & f^*(u)+f(x)=\langle u,x\rangle=\langle \nabla f(x),x\rangle,\\
 & f^*(v)+f(y)=\langle v,y\rangle=\langle \nabla f(y),y\rangle.
\end{align}
\end{subequations}
$L$-smoothness $\Rightarrow$ \eqref{psm3} follows by applying the Cauchy-Schwartz inequality to \eqref{psm1}; while \eqref{psm3}  $\Rightarrow$ $L$-smoothness follows by applying the Cauchy-Schwartz inequality to $\langle \nabla f(x)-\nabla f(y), x-y\rangle$ to obtain \eqref{sm2}.

Now, we drop the restriction of $f^*$ being differentiable. To this end, we let $f_\epsilon:=f+\epsilon \varphi_0$ with $\epsilon>0$, i.e., $f_\epsilon$ must be strongly convex and hence its conjugate is differentiable \cite{1970convex}. Therefore, $f_\epsilon\in \Gamma(\RR^d)$. Now, we complete the proof by showing the following chain of implication:
$$\eqref{sm}\Rightarrow \eqref{psm2}\Rightarrow \eqref{psm1}\Rightarrow\eqref{psm3} \Rightarrow\eqref{sm}.$$
In fact, \eqref{sm} says that $f$ is $L$-smooth and hence $L\varphi_0-f$ is convex. This further implies that $(L+\epsilon)\varphi_0-f_\epsilon$ is convex, i.e., $f_\epsilon$ is $(L+\epsilon)$-smooth. Thus, applying \eqref{psm2} to $f_\epsilon$, we have
$$f_\epsilon(y)\geq f_\epsilon(x)+\langle \nabla f_\epsilon(x), y-x\rangle+\frac{1}{2(L+\epsilon)}\|\nabla f_\epsilon(x)-\nabla f_\epsilon(y)\|^2, \forall x,y\in\RR^d.$$
Letting $\epsilon$ in the above inequality tend to zero, we immediately obtain \eqref{psm2}. From \eqref{psm2}, we have  $$ f(y)\geq f(x)+\langle \nabla f(x), y-x\rangle+\frac{1}{2L}\|\nabla f(x)-\nabla f(y)\|^2,$$
$$ f(x)\geq f(y)+\langle \nabla f(y), x-y\rangle+\frac{1}{2L}\|\nabla f(x)-\nabla f(y)\|^2.$$
Adding these two inequalities, we get \eqref{psm1}, which implies \eqref{psm3} by invoking the Cauchy-Schwartz inequality. It remains to show $\eqref{psm3} \Rightarrow\eqref{sm}$. By the fundamental theorem of calculus, we have
$$f(y)-f(x)=\int_0^1\langle \nabla f(x+t(y-x)), y-x\rangle dt.$$
Therefore,
\begin{eqnarray*}
\begin{array}{lll}
f(y)-f(x)-\langle \nabla f(x),y-x\rangle  &= \int_0^1\langle \nabla f(x+t(y-x))-\nabla f(x), y-x\rangle dt\\
&\leq \int_0^1\|\nabla f(x+t(y-x))-\nabla f(x)\| \|y-x\|dt\\
&\leq \int_0^1 tL \|y-x\|^2dt=\frac{L}{2}\|x-y\|^2,
\end{array}
\end{eqnarray*}
where the first inequality follows by the Cauchy-Schwartz inequality and the second one from \eqref{psm3}. This completes the proof.
\end{proof}
Combining Proposition \ref{pro1} and Corollary \ref{cor1}, we conclude that for any convex and differentiable function $f$, its $L$-smoothness can be equivalently characterized by one of the conditions \eqref{sm1}-\eqref{sm3} and \eqref{psm1}-\eqref{psm3}. Especial attention should be paid to the conditions \eqref{psm1} and \eqref{psm2}, since each of them implies the convexity of $f$. Thus, we have the following additional result.
\begin{corollary}\label{cor2}
Let $f$ be differentiable. Then, it is convex and $L$-smooth if and only if one of the conditions \eqref{psm1} and \eqref{psm2} holds.
\end{corollary}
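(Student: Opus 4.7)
The plan is to observe that the backward direction (convex and $L$-smooth implies \eqref{psm1} or \eqref{psm2}) is already contained in Corollary \ref{cor1}, so the only content to establish is that each of the two conditions \eqref{psm1} and \eqref{psm2}, taken as a standalone hypothesis on a merely differentiable $f$, already forces convexity, after which Corollary \ref{cor1} can be invoked to conclude $L$-smoothness.

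First I would handle \eqref{psm1}. The key observation is that the right-hand side $\tfrac{1}{L}\|\nabla f(x)-\nabla f(y)\|^2$ is nonnegative, so \eqref{psm1} trivially implies the monotonicity condition $\langle \nabla f(x)-\nabla f(y), x-y\rangle \geq 0$, which by condition \eqref{conv2} of Lemma \ref{lemconv} yields the convexity of $f$. Applying Cauchy--Schwartz to the left-hand side of \eqref{psm1} then gives $\|\nabla f(x)-\nabla f(y)\|\|x-y\|\geq \tfrac{1}{L}\|\nabla f(x)-\nabla f(y)\|^2$, i.e.\ $\|\nabla f(x)-\nabla f(y)\|\leq L\|x-y\|$, which is \eqref{psm3}. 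With $f$ convex and \eqref{psm3} in hand, Corollary \ref{cor1} delivers $L$-smoothness.

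Next I would handle \eqref{psm2} by reducing it to the previous case. Dropping the nonnegative term $\tfrac{1}{2L}\|\nabla f(x)-\nabla f(y)\|^2$ from \eqref{psm2} immediately gives $f(y)\geq f(x)+\langle \nabla f(x), y-x\rangle$, which is exactly condition \eqref{conv3} of Lemma \ref{lemconv}, so $f$ is convex. Then, interchanging the roles of $x$ and $y$ in \eqref{psm2} and adding the resulting inequality to the original, the first-order terms telescope into $\langle \nabla f(x)-\nabla f(y), x-y\rangle$ and we recover precisely \eqref{psm1}. By the case already treated, $f$ is $L$-smooth.

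There is no substantive obstacle here: both implications are one-line deductions from the nonnegativity of the extra quadratic term in $\|\nabla f(x)-\nabla f(y)\|$, followed by Cauchy--Schwartz or a symmetrization, and the conclusion piggy-backs on Corollary \ref{cor1}. The only thing worth flagging is that the proof of Corollary \ref{cor1} itself relied on $f$ being convex to invoke Lemma \ref{lemdual} (via the regularization $f_\epsilon = f+\epsilon\varphi_0$), so once convexity has been extracted from \eqref{psm1} or \eqref{psm2}, the appeal to Corollary \ref{cor1} is legitimate and the argument closes cleanly.
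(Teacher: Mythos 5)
Your proposal is correct and follows essentially the paper's own route: the paper obtains Corollary \ref{cor2} precisely by noting that \eqref{psm1} and \eqref{psm2} each imply convexity of $f$ (via the nonnegativity of the gradient-difference term, giving \eqref{conv2} respectively \eqref{conv3}), and then invoking Corollary \ref{cor1}; your symmetrization of \eqref{psm2} into \eqref{psm1} and the Cauchy--Schwartz step to \eqref{psm3} reproduce the implication chain already used in the proof of Corollary \ref{cor1}.
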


It should be noted although the equivalence between the $L$-smoothness and the conditions \eqref{psm1} and \eqref{psm2} is well-known for convex and differentiable, the current presentation in Corollary \eqref{cor2} seems more accurate.

\subsection{Characterizations of strong-convexity}
The results in this part will be obtained in the similar way to that in the previous subsection. Combining the second statement in Lemma \ref{lemdef} with the different convexity conditions in Lemma \ref{lemconv}, we have the following equivalent characterization of strong-convexity.
\begin{proposition}
Let $f:\RR^d\rightarrow \RR$ be a differentiable function. Then, it is $\mu$-stongly-convex if and only if one of the following conditions holds:
\begin{subequations}\label{sconv1}
\begin{align}
\label{sc1}& f(\lambda x+(1-\lambda)y)\leq \lambda f(x)+(1-\lambda)f(y)-\frac{\mu}{2}\lambda(1-\lambda)\|x-y\|^2,\forall x,y\in\RR^d, \lambda\in [0,1],\\
\label{sc2}& \langle \nabla f(x)-\nabla f(y), x-y\rangle \geq \mu\|x-y\|^2, \forall x,y\in\RR^d,\\
\label{sc3}& f(y)\geq f(x)+\langle \nabla f(x), y-x\rangle+\frac{\mu}{2}\|x-y\|^2, \forall x,y\in\RR^d.
\end{align}
\end{subequations}
\end{proposition}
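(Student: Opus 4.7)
The plan is to mirror the strategy used in Proposition \ref{pro1}, applying Lemma \ref{lemdef} together with Lemma \ref{lemconv} but this time to the auxiliary function $g := f - \mu\varphi_0$. By the second statement of Lemma \ref{lemdef}, $f$ is $\mu$-strongly-convex if and only if $g$ is convex, and $g$ inherits differentiability from $f$ with $\nabla g(x) = \nabla f(x) - \mu x$. So it suffices to write out each of the three convexity characterizations \eqref{conv1}--\eqref{conv3} from Lemma \ref{lemconv} for $g$ and translate them back into statements about $f$.

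First I would handle the chord inequality. Writing \eqref{conv1} for $g$ gives
\[
f(\lambda x+(1-\lambda)y) - \mu\varphi_0(\lambda x+(1-\lambda)y) \le \lambda f(x) + (1-\lambda)f(y) - \mu\bigl[\lambda\varphi_0(x)+(1-\lambda)\varphi_0(y)\bigr].
\]
Using the elementary identity
\[
\lambda\varphi_0(x)+(1-\lambda)\varphi_0(y) - \varphi_0(\lambda x+(1-\lambda)y) = \tfrac{1}{2}\lambda(1-\lambda)\|x-y\|^2,
\]
the $\mu$-terms combine to produce exactly the extra quadratic correction in \eqref{sc1}. Next, applying \eqref{conv2} to $g$ gives $\langle \nabla f(x)-\nabla f(y)-\mu(x-y),\,x-y\rangle \ge 0$, which rearranges immediately to \eqref{sc2}. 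Finally, \eqref{conv3} applied to $g$ reads $g(y)\ge g(x)+\langle \nabla g(x),y-x\rangle$; substituting back $g=f-\mu\varphi_0$ and simplifying
\[
\tfrac{\mu}{2}\|y\|^2 - \tfrac{\mu}{2}\|x\|^2 - \mu\langle x,y-x\rangle = \tfrac{\mu}{2}\|x-y\|^2
\]
yields \eqref{sc3}.

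There is no real obstacle here: the entire argument reduces to rewriting Lemma \ref{lemconv} through the shift by $\mu\varphi_0$, with the only computational content being the quadratic identity above. One could in principle also derive the three conditions independently and prove the implications $\eqref{sc1}\Rightarrow\eqref{sc3}\Rightarrow\eqref{sc2}\Rightarrow\eqref{sc1}$ directly, but reducing to Lemma \ref{lemconv} through Lemma \ref{lemdef} is cleaner and emphasises the unifying viewpoint of the paper, namely that $L$-smoothness and $\mu$-strong-convexity are both just convexity of an appropriately shifted function.
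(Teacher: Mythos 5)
Your proof is correct and follows exactly the paper's route: the paper also obtains this proposition by combining the second statement of Lemma \ref{lemdef} (strong convexity of $f$ equals convexity of $f-\mu\varphi_0$) with the three convexity characterizations of Lemma \ref{lemconv}, and your quadratic identities carrying out the translation are accurate. You have simply made explicit the computations the paper leaves implicit.
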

The corresponding dual characterizations are followings:
\begin{proposition}\label{strongconvex}
Let $f:\RR^d\rightarrow \RR$ be a differentiable function. Then, it is $\mu$-stongly-convex if and only if one of the following conditions holds:
\begin{subequations}\label{sconv2}
\begin{align}
\label{dsc1}& f^*(\lambda u+(1-\lambda)v)\geq \lambda f^*(u)+(1-\lambda)f^*(v)-\frac{1}{2\mu}\lambda(1-\lambda)\|u-v\|^2,\forall u,v\in\RR^d, \lambda\in [0,1],\\
\label{dsc2}& \langle \nabla f^*(u)-\nabla f^*(v), u-v\rangle \leq \frac{1}{\mu}\|u-v\|^2, \forall u,v\in\RR^d,\\
\label{dsc3}& f^*(u)\leq f^*(v)+\langle \nabla f^*(v), u-v\rangle+\frac{1}{2\mu}\|u-v\|^2, \forall u,v\in\RR^d.
\end{align}
\end{subequations}
\end{proposition}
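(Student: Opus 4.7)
The plan is to mirror the strategy used for Proposition \ref{pro2}: translate the $\mu$-strong convexity of $f$ into an equivalent $\frac{1}{\mu}$-smoothness statement about the conjugate $f^*$, and then invoke the already established smoothness characterizations in Proposition \ref{pro1} applied to $f^*$ in place of $f$, with $L$ replaced by $\frac{1}{\mu}$. Once this duality bridge is built, the three conditions \eqref{dsc1}--\eqref{dsc3} are nothing but \eqref{sm1}--\eqref{sm3} written for $f^*$, so the equivalence among them transfers for free.

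The first step is therefore to prove the duality principle: $f$ is $\mu$-strongly convex if and only if $f^*$ is $\frac{1}{\mu}$-smooth. For the forward direction, observe that by Lemma \ref{lemdef}(2), $\mu$-strong convexity is equivalent to convexity of $f-\mu\varphi_0$, whence $f=(f-\mu\varphi_0)+\mu\varphi_0$ is convex; together with differentiability this makes $f$ closed, so $f^{**}=f$, and a standard fact (strong convexity of $f$ implies differentiability of $f^*$) makes $f^*$ differentiable and convex. Now apply Lemma \ref{lemdual} to $f^*$ with $\gamma=\frac{1}{\mu}$: convexity of $\frac{1}{\mu}\varphi_0-f^*$ is equivalent to convexity of $(f^*)^*-\mu\varphi_0=f-\mu\varphi_0$, which holds by hypothesis. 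Hence $f^*$ is $\frac{1}{\mu}$-smooth. For the reverse direction, start from $\frac{1}{\mu}$-smoothness of $f^*$ and run Lemma \ref{lemdual} the other way to recover convexity of $f-\mu\varphi_0$, i.e. $\mu$-strong convexity of $f$.

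The second step is a direct invocation of Proposition \ref{pro1} with $f$ replaced by the differentiable convex function $f^*$ and $L$ replaced by $\frac{1}{\mu}$. The three conditions listed there become exactly \eqref{dsc1}, \eqref{dsc2}, and \eqref{dsc3}. Combining with Step 1 yields the five-way equivalence claimed in the proposition.

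The main obstacle is the reverse implication in Step 1: when only one of \eqref{dsc1}--\eqref{dsc3} is assumed, we must first know that $f^*$ is differentiable (so that Proposition \ref{pro1} applies to $f^*$) and, second, that the conclusion $f^{**}-\mu\varphi_0$ convex can be upgraded to $f-\mu\varphi_0$ convex, which needs $f=f^{**}$. Differentiability of $f^*$ is explicit in \eqref{dsc2} and \eqref{dsc3} and can be enforced in \eqref{dsc1} by a perturbation $f_\epsilon:=f+\epsilon\varphi_0$ in the spirit of the Corollary \ref{cor1} argument, followed by $\epsilon\to 0^+$. The identification $f=f^{**}$ then follows because differentiability forces $f$ to be closed and the smoothness conclusion for $f^*$ implies $f^{**}$ is $\mu$-strongly convex and finite-valued, which combined with $f^{**}\leq f$ pins down $f=f^{**}$. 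Once these technical points are cleared, the rest is a clean transcription of Proposition \ref{pro1}.
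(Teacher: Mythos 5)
Your route --- turning $\mu$-strong convexity of $f$ into $\tfrac{1}{\mu}$-smoothness of $f^*$ via Lemma \ref{lemdual} applied to $f^*$ (with $\gamma=\tfrac{1}{\mu}$ and $f^{**}=f$), and then reading \eqref{dsc1}--\eqref{dsc3} off Proposition \ref{pro1} applied to $f^*$ --- is exactly the argument the paper leaves implicit, and your forward direction is sound: strong convexity gives $f$ convex and closed, $f^*$ finite, convex and differentiable, so the duality bridge applies.

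The reverse direction, however, has two genuine gaps. First, the identification $f=f^{**}$ is not obtained the way you assert: ``$f^{**}$ is $\mu$-strongly convex and finite-valued, which combined with $f^{**}\leq f$ pins down $f=f^{**}$'' is not a proof, since $f^{**}\leq f$ holds for every function and by itself pins down nothing (for $\mu=0$ the claim is false, e.g.\ a double-well $f$ lies strictly above its convex hull). The statement you need is true, but it genuinely uses $\mu>0$ and lower semicontinuity of $f$: since $f^*=(f^{**})^*$, for each $x_0$ pick $u\in\partial f^{**}(x_0)$; $\mu$-strong convexity of $f^{**}(\cdot)-\langle u,\cdot\rangle$ forces every maximizing sequence of $\sup_x\{\langle u,x\rangle-f(x)\}$ to converge to $x_0$ with $f(x_n)\to f^{**}(x_0)$, and closedness of $f$ then yields $f(x_0)=f^{**}(x_0)$. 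Without this (or an equivalent argument, or simply adding convexity of $f$ to the hypotheses) the step is unproved. Second, the perturbation $f_\epsilon=f+\epsilon\varphi_0$ ``in the spirit of Corollary \ref{cor1}'' does not work for \eqref{dsc1} in the reverse direction: your hypothesis is an inequality on $f^*$, and for a possibly nonconvex $f$ there is no usable relation between $(f+\epsilon\varphi_0)^*$ and $f^*$ (the infimal-convolution identity for conjugates of sums requires convexity of $f$), so \eqref{dsc1} cannot be transferred from $f$ to $f_\epsilon$. A direct repair: when $f^*$ is finite everywhere, \eqref{dsc1} says $\tfrac{1}{\mu}\varphi_0-f^*$ is convex, and adding the subgradient inequalities of $f^*$ and of $\tfrac{1}{\mu}\varphi_0-f^*$ at any $u_0$ shows $\partial f^*(u_0)$ is a singleton, i.e.\ $f^*$ is differentiable, after which your treatment of \eqref{dsc2}--\eqref{dsc3} applies. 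Note also the degenerate case $f^*\equiv+\infty$ (e.g.\ $f=-\varphi_0$), in which \eqref{dsc1} holds vacuously while $f$ is not strongly convex; some finiteness of $f^*$ must be assumed or derived, a caveat that concerns the proposition as stated as much as your proof.
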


The above clarifies the equivalent conditions between $\mu$-stong-convexity of $f$ and $\mu^{-1}$-smoothness of $f^*$. A classic example is $f(x)=\frac{1}{2}x^TQx$, where $Q$ is a non-singular symmetric positive semi-definite $d\times d$ matrix; the conjugate $f^*(x)=\frac{1}{2}x^TQ^{-1}x$. Obviously, $f$ is $\lambda_{\min}(Q)$-stongly-convex while $f^*$ is $\lambda_{\min}(Q)^{-1}$-smooth, where $\lambda_{\min}(Q)$ represents the smallest eigenvalue of $Q$.

\begin{corollary}
Let $f:\RR^d\rightarrow \RR$ be a differentiable function. Then, it is $\mu$-stongly-convex if and only if one of the following conditions holds:
\begin{subequations}\label{sconv3}
\begin{align}
\label{psc1}& \langle \nabla f(x)-\nabla f(y), x-y\rangle \leq \frac{1}{\mu}\|\nabla f(x)-\nabla f(y)\|^2, \forall x,y\in\RR^d,\\
\label{psc2}& f(y)\leq f(x)+\langle \nabla f(x), y-x\rangle+\frac{1}{2\mu}\|\nabla f(x)-\nabla f(y)\|^2, \forall x,y\in\RR^d,\\
\label{psc3}& \|\nabla f(x)-\nabla f(y)\|\geq \mu\|x-y\|, \forall x,y\in\RR^d.
\end{align}
\end{subequations}
\end{corollary}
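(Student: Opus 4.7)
The plan is to mirror the proof of Corollary \ref{cor1} but carried out dually: $\mu$-strong-convexity of $f$ is the Fenchel dual of $(1/\mu)$-smoothness of $f^*$. A key simplification over the smoothness case is that $\mu$-strong-convexity of $f$ automatically makes $f^*$ differentiable (in fact $(1/\mu)$-smooth, by Proposition \ref{strongconvex} combined with Lemma \ref{lemdual}), so the inverse relation $\nabla f^* = (\nabla f)^{-1}$ furnished by Lemma \ref{lemfenc} is available without any regularization---the $\epsilon\varphi_0$ trick from the proof of Corollary \ref{cor1} is not needed here.

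First I would derive \eqref{psc1} and \eqref{psc2} directly from the dual conditions \eqref{dsc2} and \eqref{dsc3} in Proposition \ref{strongconvex}. Setting $u = \nabla f(x)$ and $v = \nabla f(y)$, Lemma \ref{lemfenc} gives $\nabla f^*(u) = x$ and $\nabla f^*(v) = y$, so \eqref{dsc2} becomes \eqref{psc1} upon substitution. For \eqref{psc2}, the same substitution in \eqref{dsc3}, combined with the Fenchel identities $f^*(\nabla f(x)) = \langle \nabla f(x), x\rangle - f(x)$ and $f^*(\nabla f(y)) = \langle \nabla f(y), y\rangle - f(y)$, yields \eqref{psc2} after routine rearrangement.

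It remains to handle \eqref{psc3}. Strong-convexity $\Rightarrow$ \eqref{psc3} is immediate from \eqref{sc2} and the Cauchy--Schwarz inequality:
\[
\mu\|x-y\|^2 \leq \langle \nabla f(x) - \nabla f(y), x-y\rangle \leq \|\nabla f(x) - \nabla f(y)\|\|x-y\|.
\]
I expect the main obstacle to be the converse \eqref{psc3} $\Rightarrow$ strong-convexity: unlike the smoothness analogue \eqref{psm1} $\Rightarrow$ \eqref{psm3}, where a single Cauchy--Schwarz step suffices, here the corresponding inequality runs in the wrong direction, and mere gradient coercivity by itself does not supply the missing convexity. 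To circumvent this I would again pass through the dual: substituting $x = \nabla f^*(u)$ and $y = \nabla f^*(v)$ turns \eqref{psc3} into $\|\nabla f^*(u) - \nabla f^*(v)\| \leq (1/\mu)\|u-v\|$, which by Corollary \ref{cor1} applied to the convex function $f^*$ is equivalent to $(1/\mu)$-smoothness of $f^*$, and by Lemma \ref{lemdual} this in turn is equivalent to $\mu$-strong-convexity of $f$, closing the cycle.
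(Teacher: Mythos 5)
Your forward direction is sound: strong convexity does make $f^*$ differentiable with $\nabla f^*=(\nabla f)^{-1}$, so the substitution $u=\nabla f(x)$, $v=\nabla f(y)$ in \eqref{dsc2}--\eqref{dsc3} yields \eqref{psc1}--\eqref{psc2}, and \eqref{sc2} plus Cauchy--Schwarz yields \eqref{psc3}; the observation that the $\epsilon\varphi_0$ regularization from Corollary \ref{cor1} is unnecessary on this side is correct. But this is only half of an ``if and only if''. You never address the converses \eqref{psc1} $\Rightarrow$ strong convexity and \eqref{psc2} $\Rightarrow$ strong convexity at all: the paper's model proof of Corollary \ref{cor1} closes the equivalence by the cycle \eqref{sm} $\Rightarrow$ \eqref{psm2} $\Rightarrow$ \eqref{psm1} $\Rightarrow$ \eqref{psm3} $\Rightarrow$ \eqref{sm}, whereas your argument never returns from any of the three primal conditions back to strong convexity except through the one converse discussed next.

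That one converse, \eqref{psc3} $\Rightarrow$ strong convexity, is circular as written. To substitute $x=\nabla f^*(u)$, $y=\nabla f^*(v)$ and to apply Corollary \ref{cor1} and Lemma \ref{lemdual} to $f^*$, you need to know that $f$ is convex, that $f^*$ is finite and differentiable on all of $\RR^d$, and that $\nabla f^*=(\nabla f)^{-1}$; all of these are consequences of the strong convexity you are trying to prove and none follows from \eqref{psc3} alone. Indeed the implication is false at the stated level of generality: $f=-\frac{\mu}{2}\|\cdot\|^2$ is differentiable and satisfies \eqref{psc3} with equality (and also \eqref{psc1} and \eqref{psc2}), yet it is concave. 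So no completion of your argument is possible without adding convexity of $f$ to the hypotheses (the analogue of the convexity assumption in Corollary \ref{cor1}); and even with convexity assumed, you still owe a justification that $f^*$ is differentiable with full domain---or a limiting argument in the spirit of the $\epsilon\varphi_0$ perturbation you set aside---before the dual corollary can be invoked. Your instinct that \eqref{psc3} $\Rightarrow$ strong convexity is where the ``similar to Corollary \ref{cor1}'' proof stops being routine is right, but the proposal does not actually fill that gap.
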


The proof is similar to that of Corollary \ref{cor1}; we omit the details.

\subsection{Characterizations of smoothness and strong-convexity}
Now, we want to characterize the class of functions that satisfy \eqref{sm} and \eqref{sc}, i.e., both $f-\mu \varphi_0$ and $L\varphi_0-f$ are convex. Let $\tilde{f}:= f-\mu \varphi_0$. Then, it is equivalent to the condition that $\tilde{f}$ is convex and $(L-\mu)$-smooth. Recalling the equivalent conditions in Corollary \ref{cor2}, we have the following characterizations for smoothness and strong-convexity.
\begin{proposition}
Let $f:\RR^d\rightarrow \RR$ be a differentiable function. Then, it is $L$-smooth and $\mu$-strongly-convex if and only if one of the following conditions holds:
\begin{subequations}\label{sm-sc}
\begin{align}
\label{sm-sc1}\langle \nabla f(x)-\nabla f(y), x-y\rangle \leq& \frac{L\mu}{L+\mu}\|x-y\|^2+\frac{1}{L+\mu}\|\nabla f(x)-\nabla f(y)\|^2, \forall x,y\in\RR^d,\\
\label{sm-sc2} f(y)\geq& f(x)+\langle \nabla f(x), y-x\rangle+\frac{1}{2L} \|\nabla f(x)-\nabla f(y)\|^2+\\ \nonumber
 &\frac{\mu L}{2(L-\mu)}\|x-y-\frac{1}{L}(\nabla f(x)-\nabla f(y))\|^2,
 \forall u,v\in\RR^d.
\end{align}
\end{subequations}
\end{proposition}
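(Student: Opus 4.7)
The plan is to reduce the claim to the already-established Corollary~\ref{cor2}. Writing $\tilde f := f - \mu\varphi_0$, we have $\nabla\tilde f = \nabla f - \mu\,\mathrm{id}$, and Lemma~\ref{lemdef} tells us that $f$ is $\mu$-strongly convex exactly when $\tilde f$ is convex, while $f$ is $L$-smooth exactly when $L\varphi_0 - f = (L-\mu)\varphi_0 - \tilde f$ is convex, i.e.\ when $\tilde f$ is $(L-\mu)$-smooth. So the hypothesis is equivalent to: $\tilde f$ is convex and $(L-\mu)$-smooth. By Corollary~\ref{cor2} applied to $\tilde f$ with constant $L-\mu$, this is in turn equivalent to either of
\begin{align*}
\langle \nabla\tilde f(x) - \nabla\tilde f(y),\, x-y\rangle &\geq \tfrac{1}{L-\mu}\|\nabla\tilde f(x) - \nabla\tilde f(y)\|^2, \\
\tilde f(y) &\geq \tilde f(x) + \langle \nabla\tilde f(x), y-x\rangle + \tfrac{1}{2(L-\mu)}\|\nabla\tilde f(x)-\nabla\tilde f(y)\|^2.
\end{align*}
The remaining work is purely algebraic: translate these two inequalities for $\tilde f$ into the stated inequalities for $f$.

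For the first inequality, I would substitute $\nabla\tilde f(x)-\nabla\tilde f(y) = \nabla f(x)-\nabla f(y) - \mu(x-y)$ on both sides, expand the squared norm on the right, and multiply through by $L-\mu$. The mixed term $-2\mu\langle \nabla f(x)-\nabla f(y),\,x-y\rangle$ combines with $(L-\mu)\langle \nabla f(x)-\nabla f(y),\,x-y\rangle$ to give a clean coefficient $L+\mu$, while the $\|x-y\|^2$ contributions collapse to $\mu L$. Dividing by $L+\mu$ yields \eqref{sm-sc1} (after correcting the obvious typo in the direction of the inequality).

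For the second inequality, substituting $\tilde f(x) = f(x) - \tfrac{\mu}{2}\|x\|^2$ and $\nabla\tilde f(x) = \nabla f(x) - \mu x$ turns the left-hand side into $f(y) - \tfrac{\mu}{2}\|y\|^2$ and produces, on the right, the linear/quadratic remainder $\tfrac{\mu}{2}\|x\|^2 + \mu\langle x,y-x\rangle$; combining with the $-\tfrac{\mu}{2}\|y\|^2$ moved over from the left-hand side gives exactly $\tfrac{\mu}{2}\|x-y\|^2$, leaving
\[
f(y) \geq f(x) + \langle \nabla f(x), y-x\rangle + \tfrac{\mu}{2}\|x-y\|^2 + \tfrac{1}{2(L-\mu)}\|\nabla f(x)-\nabla f(y) - \mu(x-y)\|^2.
\]
The main obstacle is recognising that the last two terms can be rewritten in the form appearing in \eqref{sm-sc2}. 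I would verify the identity
\[
\tfrac{\mu}{2}\|x-y\|^2 + \tfrac{1}{2(L-\mu)}\|\nabla f(x)-\nabla f(y)-\mu(x-y)\|^2 = \tfrac{1}{2L}\|\nabla f(x)-\nabla f(y)\|^2 + \tfrac{\mu L}{2(L-\mu)}\bigl\|x-y - \tfrac{1}{L}(\nabla f(x)-\nabla f(y))\bigr\|^2
\]
by matching, on each side, the coefficients of $\|x-y\|^2$, $\|\nabla f(x)-\nabla f(y)\|^2$, and the cross term $\langle x-y,\nabla f(x)-\nabla f(y)\rangle$; a short computation shows all three pairs of coefficients agree (to $\tfrac{\mu L}{2(L-\mu)}$, $\tfrac{1}{2(L-\mu)}$, and $-\tfrac{\mu}{L-\mu}$ respectively), which gives \eqref{sm-sc2}.

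Finally, I would note that the argument is stated for $L>\mu$ so that Corollary~\ref{cor2} applies non-trivially to $\tilde f$; the degenerate case $L=\mu$ forces $\nabla f(x)-\nabla f(y) = \mu(x-y)$, i.e.\ $f$ is quadratic, for which both \eqref{sm-sc1} and \eqref{sm-sc2} are easy to check directly. The whole reduction is essentially bookkeeping once the identification $\tilde f = f-\mu\varphi_0$ is made; the only genuinely non-obvious step is the quadratic-form identity just described.
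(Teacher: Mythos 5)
Your proposal is correct and follows essentially the same route as the paper: set $\tilde f = f-\mu\varphi_0$, observe via Lemma \ref{lemdef} that the hypothesis is equivalent to $\tilde f$ being convex and $(L-\mu)$-smooth, invoke Corollary \ref{cor2} for $\tilde f$, and translate \eqref{psm1}--\eqref{psm2} back to $f$ by the algebra you carry out (which checks out, including the quadratic-form identity). Your observation that the inequality sign in \eqref{sm-sc1} should be $\geq$ is also right — that is a typo in the statement, consistent with the cited result in \cite{nesterov2004introductory} — and your explicit restriction to $L>\mu$ is a point the paper leaves implicit.
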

The equivalent condition \eqref{sm-sc2} was essentially discovered in \cite{Taylor2016smooth}. As a necessary condition to the $L$-smoothness and $\mu$-strong-convexity, \eqref{sm-sc1} appeared in the textbook \cite{nesterov2004introductory}. Here, we highlight that it is also sufficient for a differentiable $f$ to be $L$-smooth and $\mu$-strongly-convex.

\section{Improved linear convergence for gradient descent}\label{segd}
First of all, we recall a well-known linear convergence result for the gradient descent method, stated in \cite{Karimil2016linear}.
\begin{lemma}\label{lincon}
Consider the unconstrained optimization problem $\min_{x\in\RR^d} f(x)$, where $f$ is $L$-smooth, has a nonempty solution set $\cX^*$, and satisfies the Polyak-{\L}ojasiewicz(PL) inequality
\begin{equation}\label{pl}
\frac{1}{2}\|\nabla f(x)\|^2\geq \nu (f(x)-\bar{f}), \forall x\in \RR^d,
\end{equation}
where $\bar{f}$ denotes the optimal function value. Then the gradient descent \eqref{GD} with a step-size of $\frac{1}{L}$ has a global linear convergence rate
\begin{equation}\label{linear1}
  f(x_{k+1})-\bar{f}\leq (1-\frac{\nu}{L})(f(x_{k})-\bar{f}).
\end{equation}
\end{lemma}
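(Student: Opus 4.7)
The plan is to chain the standard ``descent lemma'' coming from $L$-smoothness with the PL inequality \eqref{pl} to obtain a one-step contraction of the suboptimality gap $f(x_k)-\bar{f}$, and then iterate. Nothing dual-flavoured or subgradient-based is needed; the whole argument stays in primal terms.

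Concretely, I would first apply the $L$-smoothness characterization \eqref{sm3} with $x=x_k$ and $y=x_{k+1}=x_k-\frac{1}{L}\nabla f(x_k)$. The linear term contributes $-\frac{1}{L}\|\nabla f(x_k)\|^2$ and the quadratic term contributes $+\frac{1}{2L}\|\nabla f(x_k)\|^2$, so at the stepsize $t=1/L$ these combine into the familiar sufficient-decrease bound
$$f(x_{k+1}) \leq f(x_k) - \frac{1}{2L}\|\nabla f(x_k)\|^2.$$
Subtracting $\bar{f}$ from both sides yields
$$f(x_{k+1}) - \bar{f} \leq \bigl(f(x_k)-\bar{f}\bigr) - \frac{1}{2L}\|\nabla f(x_k)\|^2.$$

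I would then invoke \eqref{pl} at $x=x_k$ to replace $\tfrac{1}{2}\|\nabla f(x_k)\|^2$ by the lower bound $\nu(f(x_k)-\bar{f})$, obtaining
$$f(x_{k+1})-\bar{f} \leq \left(1-\frac{\nu}{L}\right)\bigl(f(x_k)-\bar{f}\bigr),$$
which is exactly \eqref{linear1}. Since both the smoothness inequality and the PL inequality are assumed to hold on all of $\RR^d$, the recursion holds globally with no trust-region or localization argument.

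I do not expect any real obstacle here: this is a textbook argument, and the only calculational subtlety is that $t=1/L$ is precisely the value at which the linear-in-gradient and quadratic-in-gradient contributions of \eqref{sm3} combine into the clean coefficient $\frac{1}{2L}$. The assumption $\cX^*\neq\emptyset$ enters only implicitly, guaranteeing $\bar{f}>-\infty$ and $f(x_k)-\bar{f}\geq 0$, so that the contraction factor $1-\nu/L\in[0,1)$ makes sense as a rate (note $\nu\leq L$ is forced whenever both \eqref{pl} and $L$-smoothness hold at a minimizer). A clean induction on $k$ then delivers $f(x_k)-\bar f\le(1-\nu/L)^k(f(x_0)-\bar f)$ as a byproduct.
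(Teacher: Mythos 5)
Your argument is correct and is essentially identical to the paper's own two-step proof: apply the $L$-smoothness bound \eqref{sm3} at $x=x_k$, $y=x_{k+1}$ with step size $1/L$ to get the sufficient-decrease inequality \eqref{step1}, then invoke the PL inequality \eqref{pl} at $x_k$ and combine. No gaps; the remarks about $\nu\leq L$ and iterating the recursion are fine but not needed for the stated bound.
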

The proof consists of two steps. First, using the $L$-smoothness \eqref{sm} with $y=x_{k+1}$ and $x=x_k$, we obtain
\begin{equation}\label{step1}
  f(x_{k+1})\leq f(x_{k}) -\frac{1}{2L}\|\nabla f(x_k)\|^2.
\end{equation}
Second, using the PL inequality with $x=x_k$, we get
\begin{equation}\label{step2}
\frac{1}{2}\|\nabla f(x_k)\|^2\geq \nu (f(x_k)-\bar{f}).
\end{equation}
The linear convergence result \eqref{linear1} follows directly by combining \eqref{step1} and \eqref{step2}. In what follows, we will show that the rate of linear convergence \eqref{linear1} can be improved if the objective $f$ is not only $L$-smooth but also convex. From Corollary \ref{cor2}, we know that \eqref{psm2} characterizes the $L$-smoothness and the convexity of $f$ at the same time. Now, using \eqref{psm2} with $x=x_{k+1}$ and $y=x_k$, we obtain
\begin{equation}\label{step3}
  f(x_{k+1})\leq f(x_{k}) -\frac{1}{2L}\|\nabla f(x_k)\|^2-\frac{1}{2L}\|\nabla f(x_{k+1})\|^2.
\end{equation}
Note that the PL inequality $x=x_{k+1}$ implies that
\begin{equation}\label{step4}
\frac{1}{2}\|\nabla f(x_{k+1})\|^2\geq \nu (f(x_{k+1})-\bar{f}).
\end{equation}
Now, combining \eqref{step3}, \eqref{step2} and \eqref{step4}, we have the following improved linear convergence.
\begin{proposition}
Consider the unconstrained optimization problem $\min_{x\in\RR^d} f(x)$, where $f$ is convex and $L$-smooth, has a nonempty solution set $\cX^*$ with $\bar{f}$ being the optimal function value, and satisfies the PL inequality \eqref{pl}. Then the gradient descent \eqref{GD} with a step-size of $\frac{1}{L}$ has a global linear convergence rate
\begin{equation}\label{linear2}
  f(x_{k+1})-\bar{f}\leq \frac{L-\nu}{L+\nu}(f(x_{k})-\bar{f}).
\end{equation}
\end{proposition}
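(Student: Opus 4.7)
The plan is to follow the roadmap sketched in the discussion immediately preceding the proposition, replacing the standard descent inequality \eqref{step1} by the sharper one \eqref{step3} afforded by the equivalent characterization \eqref{psm2} for convex $L$-smooth functions (available by Corollary \ref{cor2}), and then invoking the PL inequality at both $x_k$ and $x_{k+1}$.

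First, I would plug $x=x_{k+1}$, $y=x_k$ into \eqref{psm2} and substitute the update rule $x_k - x_{k+1} = \frac{1}{L}\nabla f(x_k)$ coming from \eqref{GD} with step-size $1/L$. Expanding the squared norm $\|\nabla f(x_{k+1})-\nabla f(x_k)\|^2$ by polarization, the mixed term $\frac{1}{L}\langle \nabla f(x_{k+1}), \nabla f(x_k)\rangle$ produced by the linear piece cancels exactly against $-\frac{1}{L}\langle \nabla f(x_{k+1}), \nabla f(x_k)\rangle$ produced by the squared gradient, yielding \eqref{step3}. This is the one-line computation that makes the extra $-\frac{1}{2L}\|\nabla f(x_{k+1})\|^2$ term appear.

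Next, I would apply the PL inequality \eqref{pl} at both iterates, giving \eqref{step2} and \eqref{step4}, and substitute both gradient-norm lower bounds into \eqref{step3}. After subtracting $\bar{f}$ from each side and moving the $(f(x_{k+1})-\bar{f})$ contribution to the left, this rearranges to
\begin{equation*}
\left(1+\tfrac{\nu}{L}\right)(f(x_{k+1})-\bar{f}) \;\leq\; \left(1-\tfrac{\nu}{L}\right)(f(x_k)-\bar{f}),
\end{equation*}
and dividing through by $1+\nu/L$ produces the claimed contraction factor $\frac{L-\nu}{L+\nu}$. There is no serious obstacle: the entire argument is a few lines of scalar manipulation. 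The only point that needs attention is the cancellation in the first step, which is what distinguishes this analysis from the classical one and makes it essential that one uses \eqref{psm2} — an equivalent form that simultaneously encodes convexity and $L$-smoothness — rather than \eqref{sm} alone.
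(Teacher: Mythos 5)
Your proposal is correct and follows essentially the same route as the paper: it derives \eqref{step3} from \eqref{psm2} with $x=x_{k+1}$, $y=x_k$ (the cancellation of the cross term via the update rule is exactly the computation the paper leaves implicit), then combines it with the PL inequality at both $x_k$ and $x_{k+1}$ and rearranges to get the factor $\frac{L-\nu}{L+\nu}$. No gaps.
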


\section*{Acknowledgements}
The third author was supported by the National Science Foundation of China (No.11971480), the Natural Science Fund of Hunan for Excellent Youth (No.2020JJ3038), and the Fund for NUDT Young Innovator Awards (No. 20190105).

\small

\end{document}